\newtheorem{theorem}{Theorem}
\newtheorem{prop}{Proposition}
\newtheorem{corollary}{Corollary}
\newtheorem{lemma}{Lemma}
\newtheorem*{thma}{Theorem}
\theoremstyle{definition}
\newtheorem{eg}{Example}
\newtheorem{defn}{Definition}
\def\a{\alpha}
\def\b{\beta}
\def\bR{\mathbb R}
\def\rk{\textup{rank}\, }
\def\bx{{\bf x}}
\def\be{{\bf e}}
\def\row{\mathrm{row}}
\begin{document}

\title[characteristic polynomial of hyperplane arrangements]{Characteristic polynomial of certain hyperplane arrangements through graph theory}
\date{\today}
\author{Joungmin Song}
\address{Division of Liberal Arts \& Sciences\\GIST\\ Gwangju, 500-712, Korea}

\subjclass[2010]{32S22(primary), and 05C30(secondary)}
\keywords{hyperplane arrangements, characteristic polynomials, bipartite graphs}
\thanks{This work is supported by GIST Research Fund}
\email{songj@gist.ac.kr}
\begin{abstract}
We give a formula for computing the characteristic polynomial for certain hyperplane arrangements in terms of the number of bipartite graphs of given  rank and cardinality.

\end{abstract}
\maketitle


\section{Introduction}

In this paper, we continue our study of the hyperplane arrangements $\mathcal J_n$ introduced in
\cite{SONG} which consists of the {\it type I walls}
\[
H_{\a\b} := \{ \bx \in \bR^n \, | \, x_\a + x_\b = 1\} = H_{\b\a}, \quad 1 \le a, b \le n
\]
and the {\it type II walls}
\[
0_i := \{\bx \in \bR^n  \, | \, x_i = 0\}, \mbox{ and } 1_i:=\{\bx \in \bR^n  \, | \, x_i = 1\}, 1 \le i \le n.
\]
This has a strong resemblance to the Shi arrangements \cite{Shi}.  The eventual goal of the project is to give a complete formula for the number of chambers i.e. the connected components of the complement of the hyperplanes. In the previous work,  we associated a colored graph to each hyperplane sub-arrangement of $\mathcal J_n$, and then gave an interpretation of the centrality condition of the arrangements in terms of graph properties \cite[Theorem~1]{SONG}.

Due to \cite{Zas75}, to compute the number of chambers, one has to enumerate the  central sub-arrangements of a given rank: the characteristic polynomial of $\mathcal J_n$ is defined
\[
\chi_{\mathcal J_n}(t) = \sum_{\mathcal B \subset \mathcal J_n \, \, \mbox{central}} (-1)^{\left|\mathcal B\right|} t^{n-\rk(\mathcal B)}
\]
and the number of chambers (resp. relatively bounded chambers) equals $(-1)^n \chi_{\mathcal J_n}(-1)$ (resp. $(-1)^{\rk \mathcal J_n}\chi_{\mathcal J_n}(+1)$). Let $\gamma_{k,s}$ be the number of central sub-arrangements of rank $k$ and cardinality $s$. Then  we have

\begin{equation}\label{E:char}
\chi_{\mathcal J_n}(t) = \sum_{k,s} (-1)^s \gamma_{k,s} t^{n-k}. \tag{$\dagger$}
\end{equation}

Due to \cite[Theorem~1]{SONG}, we may equivalently enumerate the central graphs of given rank and cardinality.
The rank of a $3$-colored graph equals, by definition, the rank of its {\it c-incidence matrix} (Definition~\ref{D:incidence}). As the first step, we give the following general formula for the rank of $3$-colored graphs:

\begin{thma} Let $G$ be a $3$-colored graph on $n$ vertices, and let $G'$ be the maximal subgraph none of whose vertices are connected to a colored vertex. Then the rank of $G$ equals
\[
n - (\mbox{number of bipartite components of} \, \,  G').
\]
\end{thma}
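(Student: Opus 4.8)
The plan is to translate the statement into a computation of the dimension of a subspace of $\bR^n$, and then to reduce that computation to the connected components of $G$. Unwinding Definition~\ref{D:incidence}, $\rk G$ equals the dimension of the span
\[
V \;=\; \mathrm{span}\Bigl(\{\, e_\a + e_\b : \{\a,\b\}\ \text{an edge of}\ G\,\}\ \cup\ \{\, e_i : i\ \text{a colored vertex of}\ G\,\}\Bigr)\subseteq \bR^n,
\]
where $e_1,\dots,e_n$ is the standard basis (the column $e_\a+e_\b$ is the normal of a type~I wall and the column $e_i$ the normal of a type~II wall). Since the generators attached to two distinct connected components of $G$ involve disjoint sets of coordinates, $V$ splits as an orthogonal direct sum $V=\bigoplus_K V_K$ over the components $K$, where $V_K\subseteq \bR^{V(K)}$ is spanned by the generators supported on $K$. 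Hence $\rk G=\sum_K \dim V_K$, and it suffices to compute $\dim V_K$ component by component.

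First I would dispose of the components that meet a colored vertex. If $K$ contains a colored vertex $i$, then $e_i\in V_K$; walking outward along the edges of $K$ and using that $e_\a\in V_K$ together with $e_\a+e_\b\in V_K$ forces $e_\b\in V_K$, connectivity of $K$ gives $e_j\in V_K$ for every $j\in V(K)$, so $V_K=\bR^{V(K)}$ and $\dim V_K=|V(K)|$. By the definition of $G'$, the remaining components — those contributing anything other than their full vertex count — are exactly the connected components of $G$ that contain no colored vertex, i.e. the components of $G'$.

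It remains to treat a colorless component $K$, where $V_K=\mathrm{span}\{e_\a+e_\b:\{\a,\b\}\in E(K)\}$ is the column space of the (unsigned) incidence matrix of $K$. Here I would invoke the classical dichotomy
\[
\dim V_K=\begin{cases}|V(K)|-1,& K\ \text{bipartite},\\[2pt] |V(K)|,& K\ \text{non-bipartite},\end{cases}
\]
which is the crux of the argument and the step I expect to be the main obstacle. For the bipartite case, if $(A,B)$ is the bipartition then $w=\sum_{a\in A}e_a-\sum_{b\in B}e_b$ is orthogonal to every $e_\a+e_\b$, so $\dim V_K\le |V(K)|-1$; the matching lower bound comes from a spanning tree, whose edge vectors are linearly independent (peel off a leaf and induct). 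When $K$ is non-bipartite it contains an odd cycle $v_0v_1\cdots v_{2k}v_0$, and the alternating sum $\sum_{j=0}^{2k}(-1)^j\bigl(e_{v_j}+e_{v_{j+1}}\bigr)$ telescopes to $2e_{v_0}$; thus $e_{v_0}\in V_K$, and propagating along $K$ as before yields $V_K=\bR^{V(K)}$, i.e. full rank.

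Summing the three cases over all components gives
\[
\rk G=\sum_{K}|V(K)|\;-\;\#\{\text{bipartite components of}\ G'\}=n-\#\{\text{bipartite components of}\ G'\},
\]
since the total number of vertices is $n$ and exactly the bipartite components of $G'$ each lose one from their vertex count. This is the claimed formula.
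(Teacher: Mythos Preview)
Your proof is correct and follows essentially the same route as the paper: decompose into connected components, show that a colored vertex or an odd cycle lets you propagate to full rank via the alternating-sum trick, and show that a colorless bipartite component has rank $|V(K)|-1$ via a spanning tree. The only cosmetic difference is that you phrase everything in terms of the column space and obtain the bipartite upper bound by exhibiting the orthogonal vector $w=\sum_{a\in A}e_a-\sum_{b\in B}e_b$, whereas the paper works with explicit row operations and obtains the same bound by cutting even cycles down to a spanning tree.
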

This will be proved in Sections~\ref{S:rank} and \ref{S:char} (Theorem~\ref{T:main}). One nice byproduct is a proof of that the rank of a color-less graph is full if and only if it has an odd cycle. Most standard textbooks in graph theory treat the rank of incidence matrix for the signed case only\footnote{In fact, all textbooks that I examined treat the signed case only.}, where the matrix is considered modulo $2$ and the analysis is considerably simpler. Although this certainly has been known to experts (see \cite{vanNuffelen}, e.g.), our method has its merit in that it clearly explains how odd cycles force the rank to be full and how even cycles may be cut without altering the rank. Hence a bipartite graph and its spanning trees have the same rank (Proposition~\ref{P:rank-spanning-tree}).

We also fulfill our promise we made in \cite{SONG} of utilizing more graph theory: in Section~\ref{S:formula}, we give a formula for computing the number of central hyperplane subarrangements of $\mathcal J_n$ in terms of the number of bipartite graphs of given rank and cardinality (Definition~\ref{D:card}). Counting the number of bipartite and various types of graphs has been extensively researched by many authors \cite{Gainer-Dewar, Hanlon, Harary58, Harary73, Ueno}. We shall utilize some of these results in a forthcoming work to give a more comprehensive formula.

The strategy for obtaining our formula is to decompose $3$-colored graphs into colored and colorless parts, and then find a relation between the number of the colored central graphs and that of the color-less  central graphs. We make this more precise in the rest of the introduction. As in the proof of \cite[Theorem~1]{SONG}, we decompose a $3$-colored graph $G$  into  three subgraphs $G', G'', G'''$:
\begin{enumerate}
\item ({\it graph of the first kind}) $G'$ is the union of colorless connected components;
\item ({\it graph of the second kind}) $G''$ is the union of isolated colored vertices;
\item ({\it graph of the third kind}) $G''' = G\setminus(G'\cup G'')$ is the union of the connected components with at least one colored vertex and at least one edge.
\end{enumerate}
The most intersting part is the enumeration of the central graphs of the third kind, and it is really the key result of this article. Let
$E'_{m,k,s}$ be the set of the connected, bipartite (colorless) graphs  on $[m]$ whose rank is $k$ and cardinality is $s \ge 1$, and let $E'''_{m,k,s}$ be the set of the connected, central graphs of the third kind on $[m]$ whose rank is $k$ and {\it cardinality} (Definition~\ref{D:card}) is $s$.

\begin{thma}\label{T:main} We have
\[
 |E'''_{k,k,s}| =  \sum_{t=1}^{s-k+1} 2 |E'_{k,k-1,s-t}| \binom kt.
 \]
\end{thma}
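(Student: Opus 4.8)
The plan is to convert the identity into a bijection between the connected central graphs of the third kind on $[k]$ of cardinality $s$ and triples $(H,S,g)$, where $H$ is a connected bipartite colorless graph on $[k]$, $S$ is the set of colored vertices, and $g$ is one of two global $2$-colorings. First I would dispose of the rank hypotheses, which turn out to be automatic. A connected graph of the third kind carries a colored vertex, and being connected, every vertex is joined by a path to it; hence the subgraph $G'$ appearing in the rank formula (the first theorem above) is empty and the rank is forced to equal $k$. Thus $E'''_{k,k,s}$ is simply the set of all connected central graphs of the third kind on $[k]$ of cardinality $s$. Dually, a connected bipartite colorless graph on $[k]$ has rank $k-1$ by the same formula, so $E'_{k,k-1,s-t}$ is exactly the set of connected bipartite graphs on $[k]$ with $s-t$ edges.

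The key structural fact is that centrality forces the underlying colorless graph to be bipartite. The type I wall $\{x_\alpha+x_\beta=1\}$ gives $x_\beta=1-x_\alpha$; on a connected component whose underlying graph has an odd cycle the edge equations have full rank (the byproduct of the rank formula noted in the introduction), so their unique real solution is $x_i\equiv 1/2$, which is incompatible with any type II wall $x_i=0$ or $x_i=1$. Hence a connected central graph carrying a colored vertex contains no odd cycle, so its underlying graph $H$ is connected bipartite and has a unique bipartition $\{A,B\}$. On such a component the edge equations force $x$ to be constant on each part with complementary values, so the central colorings are precisely those obtained from one of the two global assignments $g_0\colon A\mapsto 0,\ B\mapsto 1$ and $g_1\colon A\mapsto 1,\ B\mapsto 0$: each colored vertex $v$ must receive the color $g(v)$ for a single fixed $g\in\{g_0,g_1\}$.

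With this in hand I would define the bijection. To a graph $G\in E'''_{k,k,s}$ with $t$ colored vertices I associate $(H,S,g)$, where $H\in E'_{k,k-1,s-t}$ is the underlying graph, $S\subseteq[k]$ with $|S|=t$ is the colored set, and $g\in\{g_0,g_1\}$ is the global assignment restricting to the given coloring on $S$; conversely $(H,S,g)$ recovers $G$ by coloring each $v\in S$ with $g(v)$. Under the cardinality convention of Definition~\ref{D:card} the $s-t$ edges of $H$ together with the $t$ type II walls account for all $s$ walls, i.e. $s=(s-t)+t$. The two maps are mutually inverse: the forward map is well defined because, $S$ being nonempty, Step~two supplies a unique matching $g$; and it is injective because $g_0$ and $g_1$ differ at every vertex, so two distinct global assignments produce distinct colorings on the nonempty set $S$. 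Counting the triples for fixed $t$ therefore gives $2\binom{k}{t}\,|E'_{k,k-1,s-t}|$, the factor $2$ being the two assignments and $\binom{k}{t}$ the choice of $S$; summing over the admissible range $1\le t$ (at least one colored vertex) and $s-t\ge k-1$ (a connected graph on $k$ vertices has at least $k-1$ edges), that is $t\le s-k+1$, yields the stated formula.

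The main obstacle I anticipate is not the counting but pinning down the centrality characterization cleanly: I must be certain that over $\bR$ an odd cycle genuinely forces $x\equiv 1/2$, so that colored vertices are incompatible with non-bipartiteness, and that on a bipartite component every central coloring arises from exactly one of $g_0,g_1$. The delicate point in the enumeration is the factor $2$: it is valid precisely because $S\neq\emptyset$, and both the case where all colored vertices lie in one part and the case where they straddle $A$ and $B$ must be checked to confirm that $g_0$ and $g_1$ never collapse to the same colored graph.
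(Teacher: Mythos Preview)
Your proposal is correct and follows essentially the same route as the paper: pass from $G\in E'''_{k,k,s}$ to its underlying colorless graph, observe that centrality together with the presence of a colored vertex forces bipartiteness (hence rank $k-1$), and then count the $2\binom{k}{t}$ central recolorings of $t$ vertices. The only cosmetic difference is that you justify bipartiteness via the hyperplane equations (an odd cycle forces $x\equiv 1/2$, incompatible with a type~II wall) while the paper appeals directly to the graph-theoretic centrality definition; your explicit check that the factor $2$ does not collapse because $S\ne\emptyset$ is a point the paper leaves implicit.
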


From this, we can readily obtain the following formula for counting the number of central graphs of given rank and cardinality in terms of the number of bipartite graphs. Let $\nu'_{m,n}$ (resp. $\nu^b_{m,n}$) be the number of connected graphs (resp. connected bipartite graphs) of order $m$ and size $n$.
\begin{thma}
The number of central graphs on $[n]$ of rank $k$ and cardinality $s$ equals
\[
 \begin{array}{cll}
 \gamma_{k,s} & = &
\sum_{n, s}\binom n{n_b,n_{nb},n_2,n_3} 2^{n_2} \cdot \left( \sum_{n_b, s_b} \binom{n_b}{n_{b1},n_{b2},\dots,n_{b\ell}}' \prod\nu^b_{n_{bi}, s_{bi}}\right) \\
 & & \cdot \left( \sum_{n_{nb}, s_{nb}} \binom{n_{nb}}{n_{nb1},n_{nb2},\dots,n_{nb\ell'}}' \prod(\nu'_{n_{nbi}, s_{nbi}} - \nu^b_{n_{nbi},s_{nbi}})\right) \\
& & \cdot \left(\sum_{n_3, s_3} \binom{n_3}{n_{31},n_{32},\dots,n_{3\ell_3}}'\prod \left(\sum_{t_i=1}^{s_{3i}-n_{3i}} 2 \nu^b_{n_{3i}-1, s_{3i}-t_i} \binom{n_{3i}}{t_i} \right)\right)
\end{array}
\]
where the sum runs over all partitions
\[
\begin{array}{c}
n = n_b+n_{nb}+n_2+n_3; s = s_b+ s_{nb}+ s_2+s_3 \\
n_b=\sum_{i=1}^\ell n_{bi}; s_b = \sum_{j=1}^\ell s_{bj} \\
n_{nb}=\sum_{i=1}^{\ell'} n_{nbi}; s_{nb} = \sum_{j=1}^{\ell'} s_{nbj} \\
n_3=\sum_{i=1}^{\ell_3} n_{3i}; s_3=\sum_{j=1}^{\ell_3} s_{3j} \\
\end{array}
\]
such that the rank condition $k = n - \ell$ is satisfied (see Theorem~\ref{T:rank}).
\end{thma}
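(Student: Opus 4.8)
The plan is to count labeled central $3$-colored graphs $G$ on $[n]$ by first sorting them according to the canonical decomposition $G = G' \sqcup G'' \sqcup G'''$ into graphs of the first, second and third kind, and then counting each piece independently. The structural fact that makes this work is that the linear system defining centrality decouples over the connected components of $G$: the variables attached to distinct components are disjoint, so $G$ is central if and only if each of its connected components is, and both the rank and the cardinality are additive over components. Thus the enumeration factors as a product over the three kinds, with an outer multinomial coefficient $\binom{n}{n_b,n_{nb},n_2,n_3}$ recording how $[n]$ is distributed among the colorless components ($n_b$ in bipartite ones, $n_{nb}$ in non-bipartite ones), the isolated colored vertices ($n_2$), and the third-kind components ($n_3$).

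Next I would treat each kind in turn. For the second kind, an isolated colored vertex is a single type II wall, either $0_i$ or $1_i$, which is automatically central and contributes $2$ to the count, $1$ to the cardinality, and $1$ to the rank; this yields the factor $2^{n_2}$. For the first kind, every colorless component is central, since all type I walls pass through $(1/2,\dots,1/2)$; here the only role of the bipartite/non-bipartite distinction is the rank, which by Theorem~\ref{T:rank} equals $n_{bi}-1$ for a connected bipartite component and $n_{nbi}$ for a connected non-bipartite one. Partitioning the $n_b$ (resp.\ $n_{nb}$) vertices into unordered connected blocks and counting the graphs on each gives the bipartite factor $\prod \nu^b_{n_{bi},s_{bi}}$ and the non-bipartite factor $\prod(\nu'_{n_{nbi},s_{nbi}}-\nu^b_{n_{nbi},s_{nbi}})$, the subtraction coming from the identity (connected non-bipartite) $=$ (connected) $-$ (connected bipartite). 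For the third kind, a connected component carries at least one colored vertex, so the subgraph $G'$ of Theorem~\ref{T:rank} is empty on it and its rank equals its order $n_{3i}$; the central such components on $n_{3i}$ vertices of cardinality $s_{3i}$ are exactly $E'''_{n_{3i},n_{3i},s_{3i}}$, whose count is supplied by Theorem~\ref{T:main} and is precisely the displayed inner sum $\sum_{t_i} 2\,\nu^b_{n_{3i}-1,s_{3i}-t_i}\binom{n_{3i}}{t_i}$.

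With the three factors in hand, I would assemble the formula by the exponential-formula bookkeeping. The primed multinomial coefficients inside each kind distribute the available vertices among an unordered collection of connected blocks, including the symmetry correction that prevents blocks of equal size from being overcounted; combined with the per-block counts, they give the number of labeled graphs of that kind on the chosen vertices. The rank condition is then read off from additivity: summing the per-component ranks gives total rank $(n_b-\ell)+n_{nb}+n_2+n_3 = n-\ell$, where $\ell$ is the number of bipartite first-kind components, so imposing $k=n-\ell$ selects exactly the graphs of rank $k$, in agreement with Theorem~\ref{T:rank}. Likewise the cardinalities add to $s=s_b+s_{nb}+s_2+s_3$, and summing over all partitions compatible with these constraints produces $\gamma_{k,s}$.

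The genuinely hard input is Theorem~\ref{T:main}, proved separately; granting it, the rest is bookkeeping. The main obstacle I anticipate is making the unordered-component count exact, that is, pinning down the primed multinomials so that each labeled central graph is produced once and only once, and verifying that centrality is decided component by component in the strong form needed here. The crucial case is that a colorless non-bipartite component remains central (through $(1/2,\dots,1/2)$) and contributes full rank, whereas attaching any colored vertex to such a component destroys centrality; this is what confines the rank deficiency to the $\ell$ bipartite first-kind components and thereby yields the rank condition $k=n-\ell$.
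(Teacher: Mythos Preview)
Your proposal is correct and follows essentially the same approach as the paper's proof: decompose a central $3$-colored graph into $G'_b \sqcup G'_{nb} \sqcup G'' \sqcup G'''$, use additivity of order, rank, and cardinality over connected components, distribute the labeled vertices via multinomials (with the reduced-multinomial correction for unordered blocks of equal size), count each kind separately, and invoke Theorem~\ref{T:main} for the third-kind factor. Your write-up is in fact slightly more explicit than the paper's on two points the paper takes for granted---that every colorless component is automatically central (because all type~I walls meet at $(1/2,\dots,1/2)$), and that the rank contributions sum to $n-\ell$ with $\ell$ the number of bipartite first-kind components---but the argument is the same.
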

The above theorems are proved in Section~\ref{S:formula} (Theorem~\ref{T:formula} and Theorem~\ref{T:main}). The reduced multinomial $\binom a{a_1, a_2, \dots, a_r}'$ is defined in Definition~\ref{D:red-multinomial}. This formula provides a mean to compute the characteristic polynomial purely in terms of  the (bipartite) colorless graphs of given rank and size. In a forthcoming paper, we shall prove a characteristic polynomial formula using the graph enumeration results mentioned above as well as implementing the formula with a computer algebra system.

\section{The rank of the c-incidence matrices}\label{S:rank}

We let $\be_j$ denote the row vector with $1$ at the $j$th place and zero elsewhere. By a $3$-colored graph, we shall mean a graph $G = (V, E)$  with a color function $\gamma : V \to \{\pm1, 0\}$, with the convention that a vertex $v$ with $\gamma(v) = 0$ is {\it colorless}\footnote{In \cite{SONG}, the color function takes values in $\{0, 1, *\}$ with $*$ denoting the non-color, but for the purpose of computing  ranks, $\{\pm1, 0\}$ is judicious.}.

For a colored graph $G=(V,E)$, let $G_{mono}$ denote the colorless graph obtained by forgetting the colors on $V$.

\begin{defn}\label{D:incidence} Let $G=(V,E)$ be a colored graph on $n$ ordered vertices $\{1, 2, \dots, n\}$ and ordered edges. Let $\gamma$ be the color function on $V$. The {\it c-incidence} matrix $J_G$ is the $(|E|+n)\times n$-matrix whose top $|E|\times n$ submatrix is the transpose of the incidence matrix of $G_{mono}$ and whose bottom $n\times n$ submatrix has $i$th row equal to $\gamma(i)\be_i$. We define the \emph{rank} of the graph $G$ to be the rank of its c-incidence matrix.
\end{defn}
Since changing the ordering of vertices in $V$ (resp. edges in $E$) is equivalent to multiplying the corresponding permutation matrix on the right (resp. left) of the c-incidence matrix, the rank of $J_G$ is independent of the choice of orderings on $V$ and $E$.
Note that $J_G$ equals (the transpose of) the regular incidence matrix with $n$ zero rows attached in the bottom if $G$ is not colored. In particular, the rank of $J_G$ is equal to the rank of the incidence matrix.

We begin by recording some basic, immediate observations about incidence matrix of colorless graphs.

\begin{eg}\label{E:basic}
\begin{enumerate}
\item It is immediately seen that  a linear graph $T$ on $n$ vertices has $\rk(J_T) = n-1$ since $J_T$ is an upper-triangular matrix.
\item Let $G$ be an odd cycle $(1, 2, 3, \dots, 2k-1, 1)$ on $n = 2k-1$ vertices. Order the edges naturally: $\{1,2\}$, $\{2,3\}$, $\dots$, $\{n-1,n\}$ followed by $\{n,1\}$, and consider the incidence matrix. Adding $\sum_{i=1}^{n-1}(-1)^i \row_i$ to $\row_{n}$ makes $\row_n$ equal to $2\be_n$. Then it is plain that the resulting matrix has rank $n$.
\item Let $G$ be an even cycle $(1, 2, 3, \dots, 2k, 1)$ on $n = 2k$ vertices. Order $E$ naturally as before then $\sum_{i=1}^{2k} (-1)^i\row_i = 0$.  Hence $\rk(J_G) = n-1$.
\end{enumerate}
\end{eg}

\begin{lemma}\label{L:tree} If $G$ is a tree with $n$ vertices, $\rk J_G = n-1$.
\end{lemma}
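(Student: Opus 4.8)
The plan is to reduce the statement to a claim about the ordinary incidence matrix and then prove that its rows are linearly independent. Since the tree $G$ is colorless, the color function vanishes identically, so the bottom $n \times n$ block of $J_G$ is zero and $\rk J_G$ equals the rank of the top block: the $(n-1)\times n$ matrix whose rows are $\be_i + \be_j$, one for each of the $n-1$ edges $\{i,j\}$ of $G$. Because there are only $n-1$ rows, the rank is automatically at most $n-1$, so it suffices to show these rows are linearly independent.

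First I would argue by induction on $n$, the base case $n=1$ (no edges, rank $0$) being immediate. For the inductive step, recall that every tree with $n \ge 2$ vertices has a leaf $v$, incident to a unique edge $e=\{v,w\}$. The row $\row_e = \be_v + \be_w$ is then the only row with a nonzero entry in column $v$. Hence in any dependence $\sum_{e'} c_{e'}\,\row_{e'}=0$, reading off the $v$-coordinate forces $c_e=0$; the surviving relation involves only the edge-rows of $G\setminus v$, which is again a tree, now on $n-1$ vertices. By the inductive hypothesis all remaining coefficients vanish, so the rows of $G$ are independent and $\rk J_G = n-1$.

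Alternatively, generalizing Example~\ref{E:basic}(1), I would order the vertices $1,\dots,n$ so that each vertex $i \ge 2$ is adjacent to exactly one vertex among $1,\dots,i-1$ (for instance, number the vertices so that every vertex precedes its descendants after rooting the tree at vertex $1$, so that the unique earlier neighbor of $i$ is its parent $p(i)$), and order the edges so that $e_i$ joins $i$ to $p(i)$. Restricting attention to columns $2,\dots,n$, the row $\row_{e_i} = \be_i + \be_{p(i)}$ has a $1$ in column $i$ and, since $p(i) < i$, zeros in all columns $>i$. The resulting square submatrix is therefore lower-triangular with unit diagonal, hence nonsingular, again giving rank $n-1$.

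I expect no serious obstacle here. The only points requiring care are confirming that the colorless bottom block contributes nothing, so that the claim is genuinely about the ordinary incidence matrix, and that deleting a leaf preserves the tree structure so the induction closes. This lemma is precisely the connected, odd-cycle-free case of the general rank formula, and it will serve as the building block for the later analysis of how even cycles may be cut and how bipartite components behave.
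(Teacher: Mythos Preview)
Your first argument is essentially the paper's own proof: induction on $n$ via removal of a leaf, observing that the leaf's edge-row is the unique row supported in the leaf's column (the paper phrases this as a block decomposition of $J_G$, but the content is identical). Your second, triangular-submatrix argument is a clean alternative that directly generalizes Example~\ref{E:basic}(1) and avoids induction; both are correct.
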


\begin{proof} We use induction on $n$, with the assertion trivially holding for the $n=2$ case. Any tree has at least two leaves. Rename vertices $1, \dots, n$ so that $1$ is a leaf and $\{1,2\}$ is the first edge. Then the incidence matrix is of the form
\[
\left[
\begin{tabular}{c|c c c c}
              1 & 1 & 0 & $\cdots$ & 0 \\ \hline
              0 &  &  &  &  \\
              0 & &   &$J_{G'}$   &   \\
0 &        &   &  & \\
0 &        &    &       &
\end{tabular}
\right]
\]
where $G'$ is the tree obtained by deleting $1$ and $\{1,2\}$. By induction hypothesis, $\rk(J_{G'}) = n-2$. Plainly, $\rk(J_G) = \rk(J_{G'}) +1$.
\end{proof}

Note that a connected $3$-colored graph has a colored vertex if and only if its c-incidence matrix has a row that is a nonzero constant multiple of $\be_j$ for some $j$. Since the graph is connected, one can start with $\be_j$ and run the Gaussian elimination and obtain the reduced row echelon form which should be of full rank. Hence:

\begin{lemma} Let $G$ be a connected, $3$-colored graph. If it has a colored vertex, then its incidence matrix is of full rank.
\end{lemma}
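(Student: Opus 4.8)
The plan is to exploit the observation already recorded just before the statement: a colored vertex hands us a standard basis vector in the row space of $J_G$, and connectedness then lets us propagate it to all of $\be_1, \dots, \be_n$. First I would fix a colored vertex $v$, so that $\gamma(v) \neq 0$. The bottom $n \times n$ block of $J_G$ contains the row $\gamma(v)\be_v$; since $\gamma(v)$ is a nonzero scalar, $\be_v$ itself lies in the row space of $J_G$. This is the single ``seed'' we need.

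Next I would use connectedness to order the vertices. Since $G$ is connected, I can list them as $v = w_0, w_1, \dots, w_{n-1}$ so that each $w_j$ with $j \geq 1$ is adjacent to some earlier $w_i$ with $i < j$ — for instance the order in which vertices are discovered by a breadth-first search rooted at $v$, equivalently the order coming from a spanning tree of $G$ rooted at $v$. The relevant fact about the top block is that, in the unsigned incidence convention used in Example~\ref{E:basic}, every edge $\{a,b\}$ of $G_{mono}$ contributes the row $\be_a + \be_b$ to $J_G$.

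I would then argue by induction on $j$ that $\be_{w_j}$ lies in the row space of $J_G$. The base case $j = 0$ is the first paragraph. For the inductive step, $w_j$ is joined to some earlier $w_i$ by an edge whose row is $\be_{w_i} + \be_{w_j}$; subtracting the vector $\be_{w_i}$, available by the inductive hypothesis, yields $\be_{w_j}$ in the row space. Consequently the row space of $J_G$ contains every $\be_1, \dots, \be_n$, hence equals $\bR^n$, so $\rk J_G = n$ and $J_G$ has full rank.

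There is no serious obstacle here; the point worth emphasizing is that a \emph{single} colored vertex suffices as a seed and that it is connectivity — not any parity condition — that does the rest. This stands in sharp contrast with the colorless situation of Example~\ref{E:basic}, where an odd cycle is needed to reach full rank: the presence of one nonzero color makes the alternating-sum analysis irrelevant, since subtraction along tree edges never reintroduces a cancellation. The only thing to be careful about is that each propagation step uses a genuine adjacency, which is exactly what the spanning-tree ordering guarantees.
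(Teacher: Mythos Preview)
Your argument is correct and is precisely the approach the paper takes: the paragraph preceding the lemma notes that a colored vertex contributes a nonzero multiple of some $\be_j$ and that connectedness then lets one ``run the Gaussian elimination'' to reach full rank. You have simply made that Gaussian elimination explicit via a spanning-tree induction, which is exactly the intended content.
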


\begin{prop} If a connected ($3$-colored) graph $G$ has an odd cycle, $\rk(J_G) = n$.
\end{prop}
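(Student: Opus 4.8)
The plan is to reduce the odd-cycle case to the already-established colored-vertex case. Recall that the remark preceding the previous lemma observes that a connected graph behaves like one with a colored vertex precisely when its c-incidence matrix has a row that is a nonzero scalar multiple of some $\be_j$, and that in that situation a Gaussian elimination starting from that row yields full rank. So the whole proof amounts to exhibiting such a row (or such an element of the row space) coming from the odd cycle, and then invoking connectivity exactly as in that lemma.

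First I would isolate the odd cycle $C$ inside $G$ and look only at the rows of $J_G$ indexed by the edges of $C$. Ordering those edges naturally along the cycle, I would form the same alternating combination used in Example~\ref{E:basic}(2): adding $\sum_{i}(-1)^i\row_i$ along $C$. Because the cycle has odd length, this alternating sum telescopes so that all interior contributions cancel and the two endpoint terms reinforce rather than cancel, producing $2\be_j$ for a vertex $j$ lying on $C$. The key point to verify is that this is a genuine telescoping to a nonzero multiple of a single basis vector — this is exactly what fails for an even cycle, where the same alternating combination vanishes (Example~\ref{E:basic}(3)), and it is the odd parity that rescues us. I would also note that these row operations involve only the cycle-edge rows, so they remain valid verbatim inside the larger matrix $J_G$; the remaining rows (including any color rows) play no role and only help.

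Having placed $\be_j$ in the row space of $J_G$, I would finish by propagating along the connected graph. For any edge $\{j,i\}$ the corresponding (unsigned) incidence row is $\be_i+\be_j$, so subtracting $\be_j$ leaves $\be_i$ in the row space; iterating along paths out of $j$, which exist by connectivity, shows $\be_i$ lies in the row space for every vertex $i$. Hence the row space is all of $\bR^n$ and $\rk(J_G)=n$. The main obstacle, such as it is, lies entirely in the first step: making the odd-length telescoping explicit and confirming it produces a nonzero multiple of one $\be_j$ rather than collapsing as in the even case; once that single row is secured, the connectivity argument is identical to the proof of the preceding lemma and to Lemma~\ref{L:tree}'s propagation idea.
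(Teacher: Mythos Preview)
Your proposal is correct and follows essentially the same route as the paper: obtain a nonzero multiple of some $\be_j$ from the alternating sum along the odd cycle (exactly Example~\ref{E:basic}(2)), then propagate $\be_j \mapsto \be_i$ along edges using connectivity. The only cosmetic difference is that the paper packages the propagation step as an induction on the distance $d(\cdot,\Gamma)$ from the cycle, defining nested subgraphs $G_s$ and showing each $J_{G_s}$ reduces to the identity, whereas you phrase it as iterating along paths and appeal to the preceding lemma's Gaussian-elimination remark; the underlying argument is the same.
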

\begin{proof} If $G$ has an odd cycle $\Gamma$ say, $(1, 2, \dots, 2k-1,1)$, we order the edges so that $\{1, 2\}, \{2,3\}$, $\dots$, $\{2k-1,1\}$ are the first $2k-1$ edges. Define the distance $d(j, \Gamma)$ from a vertex $j$ to the cycle $\Gamma$ to be length of the shortest path to any vertex of $\Gamma$. Let $G_s$ be the subgraph of $G$ such that $V(G_s)$ consists  of vertices of distance $\le s$ from $\Gamma$ and $E(G_s)$ consists of edges of $G$ incident only on the members of $V(G_s)$.

We shall prove that $J_{G_s}$ can be reduced to the identity for every $s$, by using induction on $s$. The assertion is true for the $s=0$ case as we have observed in Example~\ref{E:basic}. Suppose that $J_{G_s}$ has been reduced to the identity and consider $J_{G_{s+1}}$. We may order the vertices and the edges so that those of $G_s$ precede the others that do not belong to $G_s$. Then by the induction hypothesis, the primary block of $J_{G_{s+1}}$ corresponding to $J_{G_s}$ can be reduced to the identity matrix. Denote by $A$ the resulting matrix of this reduction. For every $i \in V(G_s)$, $A$ has $\be_i$ as a row.

Now, consider a vertex $j$ with distance $s+1$ from $\Gamma$. There exists $i \in V(G_s)$ that is adjacent to $j$, so that $A$ has both $\be_i$ and $\be_i+\be_j$ as rows.
 Subtracting the former from the latter gives $\be_j$.
\end{proof}

\begin{prop}\label{P:rank-spanning-tree} Suppose $G$ has only even cycles. Then the rank of $G$  is equal to the rank of a spanning tree. In particular, $\rk(J_G) = n-1$.
\end{prop}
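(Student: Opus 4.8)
The plan is to compare $J_G$ directly with $J_T$ for a spanning tree $T$ and show that the two matrices have the same row space. Since $G$ possesses a spanning tree it is connected, and since the asserted conclusion $\rk(J_G)=n-1$ would fail for a graph carrying a colored vertex (the preceding lemma forces full rank in that case), I read the hypothesis as saying that $G$ is a connected, colorless graph all of whose cycles are even. Fix any spanning tree $T$. The rows of $J_T$ are a sub-collection of the rows of $J_G$, the remaining rows coming from the non-tree edges together with the $n$ identically-zero color rows; hence at once $\rk(J_G)\ge\rk(J_T)$, and by Lemma~\ref{L:tree} the latter equals $n-1$.

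For the reverse inequality it suffices to exhibit the incidence row of every non-tree edge as a linear combination of the tree-edge rows. Recall that the row attached to an edge $\{i,j\}$ is $\be_i+\be_j$. Given a non-tree edge $e=\{i,j\}$, adjoining $e$ to $T$ produces a unique cycle $C_e$, consisting of $e$ together with the tree path $j=v_0,v_1,\dots,v_m=i$; by hypothesis $C_e$ is even, so its edge count $m+1$ is even and $m$ is odd. The key computation is the telescoping alternating sum of the path rows, exactly as in Example~\ref{E:basic}(3): writing $r_k=\be_{v_{k-1}}+\be_{v_k}$ for the $k$th path edge, one finds $\sum_{k=1}^m(-1)^{k-1}r_k=\be_{v_0}+\be_{v_m}=\be_i+\be_j$, the interior terms cancelling in pairs while — crucially, because $m$ is odd — both endpoints survive with a plus sign. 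This displays the row of $e$ inside the span of the tree rows, so the row space of $J_G$ coincides with that of $J_T$ and therefore $\rk(J_G)=\rk(J_T)=n-1$.

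The only delicate point is the parity bookkeeping in the telescoping sum, and I regard it as the main (if modest) obstacle: it is precisely the evenness of every cycle that makes the two endpoints of each fundamental cycle appear with matching signs, so that the alternating sum reproduces $\be_i+\be_j$ rather than some other vector. Were a fundamental cycle odd, the analogous sum would instead yield $\be_i-\be_j$ and hence recover the doubled vector $2\be_i$, reflecting the full-rank phenomenon established in the previous proposition. Everything else — the containment $\rk(J_G)\ge\rk(J_T)$ and the appeal to Lemma~\ref{L:tree} for the value $n-1$ — is routine.
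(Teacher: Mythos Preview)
Your proof is correct and rests on the same idea as the paper's: the alternating sum of incidence rows around an even cycle vanishes (Example~\ref{E:basic}(3)), so the row of a cycle-closing edge lies in the span of the remaining rows. Your organization—fixing a spanning tree first and expressing each non-tree edge row via its fundamental cycle—is a slightly tidier packaging than the paper's iterative ``cut one cycle at a time'' reduction, but the underlying computation is identical.
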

\begin{proof} Assume by reordering the vertices so that $(1,2,\dots, 2k, 1)$ is a cycle in $G$. By reordering the edges, $J_G$ is of the form
\[
\left[
\begin{array}{cccccccccccccc}
1 & 1 & 0 & \cdots & \cdots & \cdots & 0\\
0 & 1 & 1&   0    & \cdots & \cdots & 0  \\
0 & 0  & 1&  1 &      0    & \cdots & 0 \\
\vdots& &  & \ddots   & &   &          0\\
1 & 0 &\cdots     &   0 &    1  & 0& 0 \\
* & * &  *           &  *    &    * &  * & *
  \end{array}
\right]
\]
where the last row means a submatrix of appropriate size. The penultimate row is $\row_{2k}$, and
 we have observed before that $\row_{2k} + \sum_{i=1}^{2k-1}(-1)^i \row_i = 0$. This means that one can delete the edge $\{1,2k\}$ without altering the rank of $J_G$. Hence once can cut all (even) cycles to obtain a spanning tree $T$ without altering the rank of the incidence matrix. Hence $\rk(J_G) = \rk(J_T) = n-1$.
\end{proof}

Summing up our findings so far, we conclude that:

\begin{theorem}\label{T:rank} Let $G$ be a connected $3$-colored graph. The rank of $J_G$ equals $n$ if and only if $G$ contains an odd cycle or a colored vertex. Otherwise, the rank is $n-1$.
\end{theorem}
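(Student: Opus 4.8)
The plan is to observe that the statement is a synthesis of the three results established just above, organized around the single structural fact that $J_G$ has exactly $n$ columns, so $\rk(J_G) \le n$ in all cases; the only question is whether the rank attains the maximal value $n$ or drops by one to $n-1$. I would prove the biconditional by treating its two implications separately and then read off the ``otherwise'' clause directly from the reverse implication.

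First I would dispatch the forward implication. Suppose $G$ contains a colored vertex or an odd cycle. If $G$ has a colored vertex, the (unlabeled) lemma guaranteeing full rank for a connected colored graph possessing a colored vertex applies and gives $\rk(J_G) = n$. If instead $G$ contains an odd cycle, the proposition on odd cycles gives $\rk(J_G) = n$. In either case the rank is full, so this direction costs nothing beyond quoting the correct result.

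For the converse I would argue contrapositively: assume $G$ has neither a colored vertex nor an odd cycle. Having no colored vertex means the color function is identically zero, so the bottom $n\times n$ block of $J_G$ vanishes and $\rk(J_G)$ equals the rank of the incidence matrix of $G_{mono}$. Having no odd cycle means $G$ is bipartite, i.e.\ every cycle (if any) is even. If $G$ is acyclic it is a tree and Lemma~\ref{L:tree} gives $\rk(J_G) = n-1$; otherwise Proposition~\ref{P:rank-spanning-tree} lets me cut every even cycle down to a spanning tree $T$ without altering the rank, so again $\rk(J_G) = \rk(J_T) = n-1$. This simultaneously shows the rank is not $n$ (the contrapositive of the converse) and pins it to exactly $n-1$, which is the ``otherwise'' assertion.

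The one point to check carefully---the closest thing to an obstacle---is that these cases are genuinely exhaustive and mutually consistent for a connected $3$-colored graph: every such $G$ either carries a colored vertex, or is colorless with an odd cycle, or is colorless and bipartite, and the three cited results cover precisely these possibilities. I would make explicit that Proposition~\ref{P:rank-spanning-tree} is being invoked in the colorless situation, so that its conclusion $n-1$ does not collide with the full-rank lemma (which forces rank $n$ the moment a color is present), and that connectedness---used in each of the cited results---is inherited throughout the argument.
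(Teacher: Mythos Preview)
Your proposal is correct and matches the paper's approach exactly: the paper presents Theorem~\ref{T:rank} with the phrase ``Summing up our findings so far, we conclude that,'' i.e.\ it is precisely the synthesis of the colored-vertex lemma, the odd-cycle proposition, and Proposition~\ref{P:rank-spanning-tree} (together with Lemma~\ref{L:tree}) that you describe. Your case split and the care you take in noting that Proposition~\ref{P:rank-spanning-tree} is invoked only in the colorless situation are exactly what is needed.
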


\section{Characteristic polynomial of $\mathcal J_n$}\label{S:char}

We turn our attention back to the  hyperplane arrangement $\mathcal J_n$ from \cite{SONG}, recalled in the introduction. As in the proof of Theorem 1,  we decompose a $3$-colored graph $G$  into  three subgraphs $G', G'', G'''$:

\begin{enumerate}
\item ({\it graph of the first kind}) $G'$ is the union of colorless connected components;
\item ({\it graph of the second kind}) $G''$ is the union of isolated colored vertices;
\item ({\it graph of the third kind}) $G''' = G\setminus(G'\cup G'')$ is the union of the connected components with at least one colored vertex and at least one edge.
\end{enumerate}

Then due to Theorem~\ref{T:rank}, $G''$ and $G'''$ are of full ranks, and a component of $G'$ has full rank if and only if it has an odd cycle. To summarize:

\begin{theorem}\label{T:main2} Let $\delta$ denote the number of bipartite components of $G'$. Then the rank of $G$ equals $n - \delta$.
\end{theorem}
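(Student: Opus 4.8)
The plan is to reduce the statement to the connected case already settled in Theorem~\ref{T:rank}, using the fact that the rank of a c-incidence matrix is additive over connected components. First I would reorder the vertices so that vertices lying in a common connected component occupy contiguous columns. Every edge of $G$ is incident only on vertices of a single component, so each edge row of the top $|E|\times n$ block of $J_G$ is supported in the columns of that component; likewise the $i$th bottom row $\gamma(i)\be_i$ is supported on the single column $i$, which lies in the column block of the component containing $i$. After grouping rows by the component to which they belong, $J_G$ becomes block diagonal with one block $J_C$ for each connected component $C$, whence $\rk(J_G) = \sum_C \rk(J_C)$.

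Next I would classify the components according to the decomposition $G = G' \cup G'' \cup G'''$ and apply Theorem~\ref{T:rank} to each one (each component is itself a connected $3$-colored graph). A component of $G''$ is a single colored vertex, hence of full rank $1$; a component of $G'''$ contains a colored vertex and so has full rank equal to its order; and a component of $G'$ is colorless, so by Theorem~\ref{T:rank} it is of full rank precisely when it contains an odd cycle, i.e. precisely when it is non-bipartite, while otherwise its rank is one less than its order.

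Writing $n_C$ for the order of a component $C$ and summing, every component contributes its full order $n_C$ to $\rk(J_G)$ except for the bipartite components of $G'$, each of which contributes $n_C-1$. Since $\sum_C n_C = n$ and the number of deficient components is exactly $\delta$, the number of bipartite components of $G'$, we conclude that $\rk(J_G) = n - \delta$.

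The step needing the most care is the block decomposition: I must verify that the bottom $n\times n$ part of $J_G$ respects the component partition (it does, since each such row is a multiple of a single $\be_i$) and that the degenerate components are correctly tallied — in particular an isolated colorless vertex is a component of $G'$ that is vacuously bipartite and contributes rank $0 = n_C - 1$, so it is properly counted among the $\delta$ deficient components. Once additivity of the rank over components is in place, the remainder is bookkeeping driven entirely by Theorem~\ref{T:rank}.
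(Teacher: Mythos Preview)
Your argument is correct and matches the paper's approach: the paper simply remarks that, by Theorem~\ref{T:rank}, the components of $G''$ and $G'''$ have full rank while a component of $G'$ has full rank exactly when it contains an odd cycle, and then states Theorem~\ref{T:main2} as a summary. Your write-up makes explicit the block-diagonal structure and the additivity of rank over components (and correctly handles the edge case of an isolated colorless vertex), which the paper leaves implicit.
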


Recall the definition of the centrality of $3$-colored graph, which is dual to the balancedness of the signed graph in \cite{Zas12} (See \cite[Section 2]{SONG}).

\begin{defn} \label{D:central} A $3$-colored graph $([n], E)$ is said to be ${\it central}$ if
\begin{enumerate}
\item if $v$ is colored, then it is not on an odd cycle, and;
\item $\gamma(v) = \gamma(v')$ (resp. $\gamma(v) \ne \gamma(v')$) for any pair of colored vertices $v, v'$ such that there is a $v-v'$ path of even (resp. odd) length.
 \end{enumerate}
\end{defn}

\begin{corollary}
\begin{enumerate}
\item  $G'$  is always central and its rank is $|V(G')| - \delta$ where $\delta$ is the number of bipartite components of $G'$;
\item $G''$  and $G'''$ are always of full rank.
\end{enumerate}
\end{corollary}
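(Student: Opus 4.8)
The plan is to treat the two numbered claims separately and to reduce each to the connected-component analysis already packaged in Theorem~\ref{T:rank}, after first recording that the c-incidence matrix splits into independent blocks along the connected components of the graph. The corollary is essentially the componentwise unpacking of the decomposition $G = G' \cup G'' \cup G'''$, so the work is in lining up the pieces rather than in any new computation.

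The preliminary observation I would establish is that $J_G$ is block-diagonal with respect to the partition of the columns (vertices) into connected components. Each edge row of $J_G$ has its two nonzero entries at the endpoints of that edge, which lie in a single component, and each of the bottom $n$ rows is $\gamma(i)\be_i$, supported on the single column $i$; so no row ever straddles two components. Hence, after reordering vertices and edges so that each component's data is contiguous, $J_G$ is a direct sum of the c-incidence matrices of its components, and $\rk(J_G) = \sum_C \rk(J_C)$ over the components $C$. Everything below then follows componentwise from Theorem~\ref{T:rank}.

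For part~(1), centrality of $G'$ is vacuous: by construction $G'$ has no colored vertex, so both clauses of Definition~\ref{D:central}, each of which constrains only colored vertices, hold trivially. For the rank, every connected component of $G'$ is colorless, so Theorem~\ref{T:rank} says such a component has full rank exactly when it contains an odd cycle, that is, exactly when it is non-bipartite; a bipartite component instead has rank one less than its number of vertices, with an isolated colorless vertex counting as a bipartite component of rank $0$. Summing over components via the block decomposition yields $\rk(J_{G'}) = |V(G')| - \delta$, where $\delta$ is the number of bipartite components of $G'$.

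For part~(2), each component of $G''$ is a single colored vertex, whose only nonzero row is $\gamma(i)\be_i$, so it has rank $1$ and is of full rank; each component of $G'''$ contains a colored vertex, so Theorem~\ref{T:rank} makes it of full rank. Block-diagonality then gives that $G''$ and $G'''$ are of full rank. There is no serious obstacle here; the only points deserving explicit care are the block-diagonal reduction of the preliminary step (which hinges on colorless vertices contributing genuinely zero bottom rows, immediate from Definition~\ref{D:incidence}) and the identification of a colorless component without an odd cycle with a bipartite component. Both are standard, and I expect the verification to be routine.
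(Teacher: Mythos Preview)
Your proposal is correct and matches the paper's approach: the paper states this corollary without a separate proof, treating it as immediate from Theorem~\ref{T:rank} and the sentence preceding Theorem~\ref{T:main2}, and your argument is precisely the componentwise unpacking of those results together with the vacuous verification of centrality for $G'$. The only thing you add is the explicit block-diagonal observation, which the paper leaves tacit.
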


\subsection{A formula in terms of the number of graphs} \label{S:formula}
\begin{defn}\label{D:card}
 \begin{enumerate}
\item The cardinality of a colored graph is the sum of the number of edges and the number of colored vertices;
\item $E'_{m,k,s}$ is the set of the connected, bipartite (colorless) graphs  on $[m]$ whose rank is $k$ and cardinality is $s \ge 1$;
\item $E'''_{m,k,s}$ is the set of the connected, central graphs of the third kind on $[m]$ whose rank is $k$ and cardinality is $s$.
\end{enumerate}
\end{defn}

We gather several basic properties that will be used frequently. First, note that $E'_{m,k,s}$ is empty unless $s \ge k-1$ (connectedness) and $k = m-1$ (bipartite).

\begin{lemma}  $E'_{m,k,m-1}$ is the set of trees of rank $k$. Since trees are never of full rank (Lemma~\ref{L:tree}), it is nonempty if and only if $k = m-1$. We let $\tau_m$ denote the cardinality of $E'_{m,m-1,m-1}$.
\end{lemma}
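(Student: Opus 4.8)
The plan is to unwind the definitions and reduce the statement to the classical characterization of trees together with Lemma~\ref{L:tree}. Since every graph in $E'_{m,k,m-1}$ is colorless, it has no colored vertices, so by Definition~\ref{D:card}(1) its cardinality is simply its number of edges. Thus the cardinality condition $s = m-1$ means precisely that we are considering connected graphs on $[m]$ with exactly $m-1$ edges.

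First I would invoke the standard fact that a connected graph on $m$ vertices has at least $m-1$ edges, with equality if and only if it is acyclic, i.e. a tree. Hence the connectedness and cardinality constraints together force every member of $E'_{m,k,m-1}$ to be a tree. Next I would observe that the bipartite hypothesis is automatically satisfied: a tree has no cycles, hence in particular no odd cycles, so it is bipartite. Consequently imposing bipartiteness excludes no trees, and $E'_{m,k,m-1}$ coincides with the set of all trees on $[m]$ of rank $k$, which is the first assertion.

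Finally, I would apply Lemma~\ref{L:tree}, which states that every tree on $m$ vertices has rank exactly $m-1$; in particular a tree is never of full rank $m$. Therefore no tree can have rank $k \ne m-1$, so $E'_{m,k,m-1}$ is empty unless $k = m-1$, while for $k = m-1$ it comprises every tree on $[m]$ and is nonempty (a path on $[m]$ already exhibits one). This yields the nonemptiness criterion, and $\tau_m := |E'_{m,m-1,m-1}|$ is then well defined as the number of trees on $[m]$. There is no genuine obstacle in this argument; the only point needing slight care is recording that the bipartiteness requirement is vacuous on trees, so that the set is not inadvertently cut down, after which the conclusion is immediate from the cited lemma.
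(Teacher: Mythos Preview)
Your proposal is correct and is precisely the routine unwinding of definitions that the paper intends: the lemma is stated without proof there, and your argument---connected on $m$ vertices with $m-1$ edges forces a tree, trees are automatically bipartite, and Lemma~\ref{L:tree} pins the rank at $m-1$---is exactly the expected justification.
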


\begin{lemma} $E'''_{m,k,s}$ is empty unless $m = k$.
\end{lemma}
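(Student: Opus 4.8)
The plan is to read off the claim directly from the rank characterization in Theorem~\ref{T:rank}. By the defining property of graphs of the third kind (item (3) in the decomposition), every member of $E'''_{m,k,s}$ is a \emph{connected} $3$-colored graph on the vertex set $[m]$ that carries \emph{at least one colored vertex}. So the only input I need is the behavior of the rank of such a graph.

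First I would recall that Theorem~\ref{T:rank} states that a connected $3$-colored graph on $m$ vertices has rank exactly $m$ (full rank) if and only if it contains an odd cycle or a colored vertex. Since a graph of the third kind possesses a colored vertex by construction, the second alternative is automatically met. Hence its rank equals $m$, which forces $k = m$. Therefore $E'''_{m,k,s}$ is empty whenever $k \neq m$, as claimed.

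There is essentially no obstacle: the statement is an immediate specialization of Theorem~\ref{T:rank} to the distinguishing feature of third-kind components, namely the presence of a colored vertex. The only bookkeeping point is to note that the subscript $m$ in $E'''_{m,k,s}$ records the number of vertices, so that ``full rank'' is precisely the equation $k = m$.
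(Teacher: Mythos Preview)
Your proof is correct and follows essentially the same route as the paper: both conclude $k=m$ from the fact that a connected $3$-colored graph of the third kind contains a colored vertex and hence has full rank (the paper invokes this implicitly, you cite Theorem~\ref{T:rank} explicitly). The only minor difference is that the paper's proof also records the side observation $s\ge m$, which is not needed for the lemma as stated.
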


\begin{proof} Recall that the c-incidence matrix of a colored vertex is an $s \times m$ matrix. Since any connected graph has $\ge m-1$ edges and any graph of the third kind has at least one colored vertex, $s \ge m$. If $E'''_{m,k,s}$ is nonempty, then a member of it has full rank, so $k = m$.
\end{proof}

\begin{defn}
\begin{enumerate}
\item
Let $\nu'_{k,s}$  denote the number of connected non-colored graphs  on $[k]$, whose rank is necessarily $k$  and size is $s$;

\item
Let $\nu^b_{k,s}$  denote the number of connected bipartite non-colored graphs  on $[k]$, whose rank is necessarily $k-1$  and size is $s$;

\item
Let  $\nu'''_{k,s}$ denote the number of connected central graphs of the  third kind on $[k]$, whose rank is necessarily  $k$ and cardinality is $s$;

\item Let $\nu''_{k,s}$ denote the number of graphs of the second kind on $[k]$ whose cardinality is $s$. Note that any such graph is of rank $k$ and the cardinality also equals $k$ since there is no edge and every vertex is colored i.e. $\nu''_{k,s} = 0$ unless $k = s$.
\end{enumerate}
\end{defn}

There is nothing much to do when it comes to counting the number of central graphs of the second kind: The following lemma is self-evident.
\begin{lemma} $\nu''_{k,k} = 2^k$.
\end{lemma}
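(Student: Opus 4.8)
The plan is to unwind the definitions and carry out a direct count, since the only content here is bookkeeping. First I would recall that a graph of the second kind is by definition a union of isolated colored vertices, so a member of the relevant set on $[k]$ is simply the vertex set $[k]$ carrying no edges, equipped with a color function $\gamma \colon [k] \to \{\pm 1, 0\}$ under which \emph{every} vertex is colored. Because ``colored'' excludes the value $0$, each vertex must satisfy $\gamma(i) \in \{+1, -1\}$.

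Next I would verify the cardinality constraint to confirm we are in the right case. By Definition~\ref{D:card}, the cardinality of a colored graph is the number of edges plus the number of colored vertices; here there are no edges and all $k$ vertices are colored, so the cardinality is exactly $k$. This re-confirms that $\nu''_{k,s} = 0$ for $s \ne k$ and reduces the task to counting the admissible colorings in the case $s = k$.

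Finally, since the vertices are labelled by $[k]$, there is no graph automorphism to quotient by: each of the $k$ vertices independently receives one of the two colors $\pm 1$, so the number of such colored graphs is $2^k$, which yields $\nu''_{k,k} = 2^k$. I do not expect any genuine obstacle in this argument; the only point requiring care is the observation that ``colored'' forbids the color $0$, so that each vertex has exactly two admissible colors rather than three, and it is this that produces the factor $2$ per vertex.
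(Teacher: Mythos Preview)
Your proposal is correct and matches the paper's reasoning: the paper declares the lemma ``self-evident'' and gives no further argument, and your direct count by unwinding the definitions is exactly the intended justification. The only point one might add for completeness is that every graph of the second kind is automatically central (both conditions of Definition~\ref{D:central} are vacuous in the absence of edges), so there is no centrality constraint to impose on the $2^k$ colorings.
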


Now we can state the main result of this paper:

\begin{theorem}\label{T:formula}
The number of central graphs on $[n]$ of rank $k$ and cardinality $s$ is
\[
 \begin{array}{cll}
\gamma_{k,s} = &
\sum_{n, s}\binom n{n_b,n_{nb},n_2,n_3} 2^{n_2} \cdot \left( \sum_{n_b, s_b} \binom{n_b}{n_{b1},n_{b2},\dots,n_{b\ell}}' \prod\nu^b_{n_{bi}, s_{bi}}\right) \\
& \cdot \left( \sum_{n_{nb}, s_{nb}} \binom{n_{nb}}{n_{nb1},n_{nb2},\dots,n_{nb\ell'}}' \prod(\nu'_{n_{nbi}, s_{nbi}} - \nu^b_{n_{nbi},s_{nbi}})\right) \\
& \cdot \left(\sum_{n_3, s_3} \binom{n_3}{n_{31},n_{32},\dots,n_{3\ell_3}}'\prod \left(\sum_{t_i=1}^{s_{3i}-n_{3i}} 2 \nu^b_{n_{3i}-1, s_{3i}-t_i} \binom{n_{3i}}{t_i} \right)\right)
\end{array}
\]
where the sum runs over all partitions
\[
\begin{array}{c}
n = n_b+n_{nb}+n_2+n_3; s = s_b+ s_{nb}+ s_2+s_3 \\
n_b=\sum_{i=1}^\ell n_{bi}; s_b = \sum_{j=1}^\ell s_{bj} \\
n_{nb}=\sum_{i=1}^{\ell'} n_{nbi}; s_{nb} = \sum_{j=1}^{\ell'} s_{nbj} \\
n_3=\sum_{i=1}^{\ell_3} n_{3i}; s_3=\sum_{j=1}^{\ell_3} s_{3j} \\
\end{array}
\]
such that the rank condition $k = n - \ell$ is satisfied (see Theorem~\ref{T:rank}).
\end{theorem}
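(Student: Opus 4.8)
The plan is to exploit the canonical decomposition $G = G' \sqcup G'' \sqcup G'''$ of a central $3$-colored graph into its first-, second-, and third-kind parts, and to observe that rank, cardinality, and centrality are all compatible with the decomposition into connected components. Since the c-incidence matrix of a disjoint union is block diagonal after reordering vertices and edges, the rank is additive over connected components; likewise the cardinality (edges plus colored vertices) is additive. Moreover centrality is a component-wise condition: both defining clauses of Definition~\ref{D:central} refer only to odd cycles and to $v$--$v'$ paths, which live inside a single connected component, so $G$ is central if and only if each of its connected components is. Hence a colorless component (a piece of $G'$) is automatically central, an isolated colored vertex (a piece of $G''$) is automatically central, and a component with a colored vertex and an edge is central exactly when it lies in some $E'''_{m,m,s}$.

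With this in hand, I would count central graphs by stratifying according to how the vertex set $[n]$ is distributed among the four structural types. First I would split $[n]$ into four labeled blocks of sizes $n_b, n_{nb}, n_2, n_3$, where $n_b$ (resp.\ $n_{nb}$) counts vertices lying in bipartite (resp.\ non-bipartite) colorless components of $G'$, $n_2$ counts the isolated colored vertices forming $G''$, and $n_3$ counts the vertices of the third-kind part $G'''$; this choice is made in $\binom{n}{n_b,n_{nb},n_2,n_3}$ ways. By Theorem~\ref{T:main2} (equivalently Theorem~\ref{T:rank}) only the bipartite colorless components lower the rank, each by one, while the non-bipartite colorless components, the second-kind, and the third-kind components all have full rank. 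Thus the total rank is $n$ minus the number $\ell$ of bipartite colorless components, which is exactly the rank constraint $k = n - \ell$ indexing the outer sum.

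Within each block I would further partition the vertices into connected components and count the admissible graphs on each. For the bipartite colorless block, the number of connected bipartite graphs on a component of order $n_{bi}$ and size $s_{bi}$ is $\nu^b_{n_{bi},s_{bi}}$; for the non-bipartite colorless block the analogous count is $\nu'_{n_{nbi},s_{nbi}} - \nu^b_{n_{nbi},s_{nbi}}$, since a connected graph is non-bipartite precisely when it fails to be one of the connected bipartite ones; the second-kind block contributes $2^{n_2}$, one independent sign choice per isolated colored vertex; and each third-kind component of order $n_{3i}$ and cardinality $s_{3i}$ is counted by $|E'''_{n_{3i},n_{3i},s_{3i}}|$, which Theorem~\ref{T:main} evaluates by summing, over the number $t_i$ of colored vertices, the product of $\binom{n_{3i}}{t_i}$ (the choice of colored vertices), the factor $2$ (the two global sign assignments compatible with the bipartition), and the number of connected bipartite colorless graphs carrying the remaining cardinality. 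Taking the product of these counts over the components of each block, summing over the component-size partitions, and imposing the cardinality constraint $s = s_b + s_{nb} + s_2 + s_3$ assembles the displayed formula.

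The step I expect to be the main obstacle is the correct enumeration of the partition into connected components, encoded by the reduced multinomial $\binom{n_b}{n_{b1},\dots,n_{b\ell}}'$ of Definition~\ref{D:red-multinomial}. The delicate point is that the collection of connected components is an \emph{unordered} family, so components of equal order and size must not be counted in a spurious order; the primed multinomial is precisely the ordinary multinomial corrected by the symmetries permuting equal blocks, and I would need to verify that pairing it with the product $\prod \nu^b_{n_{bi},s_{bi}}$ counts each labeled colorless graph with exactly $\ell$ connected bipartite components once and only once. Once this correspondence between central graphs and the data (vertex partition into four types, refinement into connected components, a labeled connected graph on each component, and the colorings) is checked to be a bijection, the formula follows by multiplying the independent contributions and summing over all admissible partitions subject to the rank and cardinality constraints.
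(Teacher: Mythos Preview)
Your proposal is correct and follows essentially the same route as the paper: decompose $G$ into $G'_b \sqcup G'_{nb} \sqcup G'' \sqcup G'''$, use additivity of order, rank, and cardinality over connected components together with the component-wise nature of centrality, then count by first distributing $[n]$ among the four types via the multinomial, further partitioning each block into unordered connected components via the reduced multinomial, and finally invoking $\nu^b$, $\nu'-\nu^b$, $2^{n_2}$, and Theorem~\ref{T:main} for the respective component counts. The delicate point you flag---that the reduced multinomial is needed precisely because the family of components is unordered---is exactly the issue the paper isolates in Definition~\ref{D:red-multinomial}.
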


\begin{proof} Since the order, the rank and the cardinality are all additive on the connected components, we shall enumerate the number of connected components satisfying suitable conditions. That is
\begin{enumerate}
\item  Decompose $G = G' \coprod G''\coprod G''' = G'_b \coprod G'_{nb} \coprod G'' \coprod G'''$; $n = n_1 + n_2 + n_3 = n_b + n_{nb} +  n_2 + n_3$; $k = k'_b + k'_{nb}+ k_2+ k_3$; $s = s_b + s_{nb} + s_2 + s_3$, where $G'_{nb}$ (resp. $G'_b$) has non-bipartite (resp. bipartite) components.
\item Decompose $G'_b= \coprod_{i} G'_{bi}$ where $G'_{bi}$ are connected components; $n_b = \sum  n_{bi}i$; $k_b = \sum k_{bi}-1$; $s_b= \sum  s_{bi}$. Note that, necessarily, $k_{bi} = n_{bi} - 1$.
\item Similarly decompose $G'_{nb}$, $G''$ and $G'''$ into connected components. Note that, necessarily, $n_{nbi} = k_{nbi}$ and $n_{3i} = k_{3i}$ (full rank).
\end{enumerate}

\noindent (Step I)
Let's first enumerate the possible $G'_b$ on  $n_b$ given vertices (among $[n]$).   $G'_b$ can be decomposed into bipartite connected components $G'_{bi}$ on $n_{bi}$ vertices. We assume that $n_{bi} \ge n_{b \, \, i+1}$. We enumerate the possible decompositions. First we choose from $n_b$ vertices the $n_{bi}$ vertices on which connected components are built: the number of ways to do this is $\binom{n_b}{n_{b1},n_{b2},\dots,n_{b\ell}}$ (multinomial coefficient)
\[
\binom{n_b}{n_{b1},n_{b2},\dots,n_{b\ell}}= \binom{n_b}{n_{b1}}\binom{n_b-n_{b1}}{n_{b2}}\cdots \binom{n_b-n_{b1}-\cdots - n_{b \, \ell-2}}{n_{b \, \ell-1}} = \frac{n_b!}{n_{b1}!n_{b2}!\cdots n_{b \ell}!}.
\]
Since the connected components are not  ordered, if some $n_{bi}$'s are equal, say $n_{b1} = n_{b2} = \cdots = n_{bk}$, we divide the multinomial coefficient by $k!$. So we define

\begin{defn}\label{D:red-multinomial}  The {\it reduced multinomial} is
 \[
\binom{a}{a_1,a_2,\dots,a_m}' = \binom{a}{a_1,\dots,a_m}\frac1{k_1!k_2!\cdots k_r!}
\]
if
\[
a_1 = \dots = a_{k_1} > a_{k_1+1} = a_{k_1+2} = \cdots = a_{k_1+k_2} > a_{k_1+k_2+1} = \cdots.
\]
\end{defn}
Then we enumerate how many $G'_{bi}$ there are: this is precisely $\nu'_{n_{bi}, s_{bi}}$. Hence
the number of all possible $G'_b$ is
\[
\sum_{n_b=\sum_{i=1}^\ell n_{bi}}\sum_{s_b = \sum_{j=1}^\ell  s_{bj}} \binom{n_b}{n_{b1},n_{b2},\dots,n_{b\ell}}' \prod\nu'_{n_{bi}, s_{bi}}
\]
where the sums run over all possible partitions of $n_b$ and $s_b$ such that
\[
k_b = \sum (n_{bi}-1) = n_b - \ell.
\]

\noindent The non-bipartite case is similar, except that the rank is full i.e. $k_{nb} = n_{nb}$.

\noindent (Step II) When $n_2$ vertices are given, there are $2^{n_2}$ ways to color them.

\noindent (Step III) Given $n_3$ vertices, we enumerate possible $G'''$ as we did $G'$s. That is, we partition the vertices and the ranks, and enumerate the possible connected central graphs of the third kind and of the given rank and the cardinality. So, we have
\[
\sum_{n_3=\sum_{i=1}^{\ell_3} n_{3i}}\sum_{s_3=\sum_{j=1}^{\ell_3} s_{3j}} \binom{n_3}{n_{31},n_{32},\dots,n_{3\ell_3}}' \prod \nu'''_{n_{3i}, s_{3i}}
\]
where the sum runs over all partitions of $n_3$ and $s_3$. We shall prove the equality
$
\nu'''_{n_{3i},s_{3i}} = \sum_{t_i=1}^{s_{3i}-n_{3i}} 2 \nu^b_{n_{3i}-1, s_{3i}-t_i} \binom{n_{3i}}{t_i}
$
separately  in Theorem~\ref{T:main} below.

Now we put these all together. We partition $n = n_b + n_{nb} + n_2+ n_3$ and choose vertices accordingly, for which we have
\[
\binom{n}{n_b,n_{nb},n_2,n_3}
\]
choices. Once we have chosen the vertices, then we just need to enumerate the possible $G', G'', G'''$ which we have done above.
\end{proof}

Now we prove the important formula relating the number of colored graphs with the number of non-colored graphs:
\begin{theorem}  We have
\[
 |E'''_{k,k,s}| =  \sum_{t=1}^{s-k+1} 2 |E'_{k,k-1,s-t}| \binom kt.
 \]
\end{theorem}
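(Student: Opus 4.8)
The plan is to establish an explicit bijection between $E'''_{k,k,s}$ and the set of triples $(H,S,\epsilon)$, where $H$ is a connected bipartite colorless graph, $S$ is a nonempty set of vertices to be colored, and $\epsilon\in\{\pm1\}$ is a global sign; one then simply counts the triples. So first I would record the structural reduction. Take $G\in E'''_{k,k,s}$, write $H:=G_{mono}$ for its underlying colorless graph, and let $S$ be its set of colored vertices, $t:=|S|$. Being of the third kind, $G$ is connected with $t\ge1$. The key claim is that $H$ is bipartite: reading each edge as the equation $x_\alpha+x_\beta=1$, an odd cycle would propagate the value $\tfrac12$ to every vertex of the (connected) component, which is incompatible with the presence of a colored vertex, whose value is $0$ or $1$; such a graph is then not central in the sense of Definition~\ref{D:central}. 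Hence $H$ is connected and bipartite, so by Proposition~\ref{P:rank-spanning-tree} it has rank $k-1$. If $H$ has $e$ edges, then $H\in E'_{k,k-1,e}$, and since the cardinality of $G$ counts edges plus colored vertices we get $s=e+t$, i.e. $H\in E'_{k,k-1,s-t}$.

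Next I would analyze the admissible colorings for a fixed pair $(H,S)$. Since $H$ is connected and bipartite it has a bipartition $(A,B)$ that is unique up to interchange; I fix the convention that $A$ is the part containing the least-labeled vertex. In a bipartite graph two vertices lie in the same part exactly when they are joined by an even path and in different parts exactly when joined by an odd path, so Definition~\ref{D:central}(2) is equivalent to demanding that every colored vertex of $A$ receive a single color $\epsilon$ and every colored vertex of $B$ receive the opposite color $-\epsilon$; condition~(1) is automatic because a bipartite graph has no odd cycle. Consequently, for a nonempty $S$ the central colorings are in bijection with the two choices $\epsilon\in\{\pm1\}$. I would stress that this holds uniformly: even when $S$ lies entirely within one part, the two values of $\epsilon$ still yield two distinct valid central colorings. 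This is exactly where the factor $2$ comes from.

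Finally I would assemble the count. The assignment $G\mapsto(H,S,\epsilon)$ is a bijection onto $\{(H,S,\epsilon): H\in E'_{k,k-1,s-t},\ S\subseteq[k],\ |S|=t,\ \epsilon\in\{\pm1\}\}$, with $t$ determined by $G$. For fixed $t$ there are $|E'_{k,k-1,s-t}|$ choices of $H$, then $\binom{k}{t}$ choices of $S$ and $2$ choices of $\epsilon$, contributing $2\,|E'_{k,k-1,s-t}|\binom{k}{t}$. Summing over the admissible $t$ gives the stated formula. The range $1\le t\le s-k+1$ is forced by $t\ge1$ (a third-kind graph has a colored vertex) together with $e=s-t\ge k-1$ (connectedness requires at least a spanning tree), the latter also guaranteeing $e\ge1$, i.e. at least one edge.

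The main obstacle is the structural claim that centrality forces the underlying graph to be bipartite, and then verifying that the factor of $2$ is genuinely uniform: once bipartiteness is in hand, condition~(2) pins the colors down to the single sign degree of freedom $\epsilon$, and the delicate point is checking that the two sign conventions remain distinct and valid even in the degenerate case where all colored vertices fall in one part of the bipartition. Once these are settled the remaining bookkeeping (matching orders, ranks, and cardinalities, and determining the range of $t$) is routine.
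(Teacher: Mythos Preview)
Your proposal is correct and follows essentially the same approach as the paper's own proof: forget the colors to obtain a connected bipartite graph in $E'_{k,k-1,s-t}$, then count the $2\binom{k}{t}$ admissible colorings over each such graph and sum over $t$. Your write-up is in fact more careful than the paper's in justifying bipartiteness (via the hyperplane interpretation together with Definition~\ref{D:central}) and in verifying that the factor $2$ persists even when all colored vertices lie in a single part of the bipartition.
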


\begin{proof} For a graph $G \in E'''_{k,k,s}$, let $_fG$ denote the graph obtained by forgetting all colors of vertices. We shall prove that $_fG$ is a member of $E'_{k,k-1,s-t}$ for some $1 \le t \le s-k+1$, and that for each $G' \in E'_{k,k-1,s-t}$, there are precisely $2 \binom kt$ graphs $G \in E'_{k,k-1,s-t}$ such that $_fG = G'$.

Let $G \in E'''_{k,k,s}$ and suppose that $G$ has  $t$ colored vertices and $s-t$ edges.
First, note that since $G$ is connected, $s-t \le k-1$ or equivalently, $t \le s-k+1$.
Since $G$ is central, connected and has a colored vertex, $G$ has no odd cycle. Thus $_fG$ is bipartite and of rank $k-1$.
Now, given $G' \in E'_{k,k-1,s-t}$, there exist precisely
\[
\binom kt \cdot 2
\]
ways to color $t$ vertices so that the resulting colored graph is in $E'''_{k,s}$: Choose a set of $t$ vertices from $G'$. For the colored graph to be central, the coloring choice of any one of the $t$ vertices determines the colors of the rest of the chosen vertices since the graph is connected.
The asserted formula follows immediately.
\end{proof}

\bibliographystyle{amsplain}
\bibliography{generate}

\end{document}